\documentclass{article}
\usepackage{amsmath}
\usepackage[dvips]{graphicx}

\usepackage{color}
\usepackage{setspace}  
\usepackage{amscd}
\usepackage{amsthm}
\usepackage{enumerate}
\usepackage[all]{xy}
\usepackage{amssymb}
\usepackage{graphicx}

\input xy
\xyoption{all}
 \setlength{\topmargin}{-0.5in}
 \setlength{\textwidth}{6.2in}
 \setlength{\oddsidemargin}{0.25in}
 \setlength{\evensidemargin}{0.25in}
\textheight 9.0in
\linespread{1.2}
\vfuzz100pt 
\hfuzz100pt 

\title{Sharply 2-transitive linear groups}
\author{Yair Glasner and Dennis D. Gulko}
\date{\today}

\newtheorem{theorem}{Theorem}[section]
\newtheorem{lemma}[theorem]{Lemma}
\newtheorem{prop}[theorem]{Proposition}
\newtheorem{defi}[theorem]{Definition}
\newtheorem{cor}[theorem]{Corollary}
\newtheorem{conj}[theorem]{Conjecture} 

\newtheorem{ex}[theorem]{Example}

\newtheorem{rmk}[theorem]{Remark}

\newcommand{\Qed}{\nobreak \ifvmode \relax \else
      \ifdim\lastskip<1.5em \hskip-\lastskip
      \hskip1.5em plus0em minus0.5em \fi \nobreak
      \vrule height0.75em width0.5em depth0.25em\fi}
      
\newcommand{\N}{{\mathbb{N}}}

\newcommand{\R}{{\mathbb{R}}}

\newcommand{\GG}{{\mathbb{G}}}

\newcommand{\arrow}{\rightarrow}

\newcommand{\Sym}{{\operatorname{Sym}}}

\newcommand{\action}{\curvearrowright}
\newcommand{\GL}{{\operatorname{GL}}}

\newcommand{\Inv}{{\operatorname{Inv}}}
\newcommand{\Char}{{\operatorname{char}}}

\newcommand{\ord}{{\operatorname{Ord}}}
\newcommand{\gtd}{{\operatorname{gtd}}}

\newcommand{\pchar}{{{\it {p}}\operatorname{-char}}}

\def\ga{\alpha}

\def\bE{\begin{enumerate}}
\def\eE{\end{enumerate}}
\def\ol{\overline}

\newcommand{\LR}[1]{\left\{#1\right\}}

\newcommand{\bit}{\begin{itemize}}
\newcommand{\eit}{\end{itemize}}

\begin{document}
\bibliographystyle{number}
\maketitle

\section*{Abstract} A group $\Gamma$ is sharply $2$-transitive if it admits a faithful permutation representation that is transitive and free on pairs of distinct points. Conjecturally, for all such groups there exists a near-field $N$ (i.e. a skew field that is distributive only from the left, see Definition \ref{NF}) such that $\Gamma\cong N^\times\ltimes N$. This is well known in the finite case. We prove this conjecture when $\Gamma < \GL_n(F)$ is a linear group, where $F$ is any field with $\Char(F)\neq 2$ and that $\pchar(\Gamma) \neq 2$ (see Definition \ref{def_prim}).

\section{Introduction}
A sharply $k$-transitive group is, by definition, a permutation group which acts transitively and freely on ordered $k$-tuples of distinct points. Quite early on it was realized that $k$ is very limited; in his 1872 paper, \cite{J}, Jordan proved that finite sharply k-transitive groups with $k \geq 4$ are either symmetric, alternating or one of the Mathieu group $M_{11},M_{12}$. In the infinite case it was proved by J. Tits, in \cite{T1}, and M. Hall, in \cite{Ha}, that $k \leq 3$ for every infinite sharply $k$-transitive group. 

Sharply $2$-transitive groups attracted the attention of algebraists for many years because they lie on the borderline of permutation group theory and abstract algebraic structures. It is easy to see that if $K$ is a skew field then the semi-direct product $K^{\times} \ltimes K$ admits a sharply 2-transitive action on $K$, in which $K$ acts by addition and $K^{\times}$ by multiplication. For this construction, it is, in fact, sufficient to assume that $K$ is a near field, which is just a skew field which is distributive only from the left (see Definition \ref{NF}). The long standing conjecture in the field is that the converse is also true:
\begin{conj} \label{conj:main}
For every sharply 2-transitive permutation group $\Gamma$ there exists a near field $N$ such that $\Gamma = N^{\times} \ltimes N$. 
\end{conj}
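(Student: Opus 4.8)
The plan is to reduce the conjecture to a purely algebraic statement about an additive loop and then to attack the associativity of that loop, since that is the only genuine gap.

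First I would invoke the classical coordinatization due to Zassenhaus. For any sharply $2$-transitive group $\Gamma$ acting on a set $\Omega$, fix two distinct points $0,1\in\Omega$ and write $H=\Gamma_0$ for the stabilizer of $0$, which acts sharply transitively, hence regularly, on $\Omega\setminus\{0\}$. Transporting the algebra onto $\Omega$ itself, one equips $\Omega$ with a multiplication under which $\Omega\setminus\{0\}\cong H$ becomes a group, and with an addition coming from a distinguished conjugacy-closed family of fixed-point-free elements. The outcome is a \emph{near-domain} $(N,+,\cdot)$ --- a structure satisfying every near-field axiom except possibly the associativity of $+$ --- together with an isomorphism of permutation groups $\Gamma\cong N^\times\ltimes N$, in which the factor $N^\times\cong H$ acts by multiplication and $N$ acts by the translations $\tau_a:x\mapsto x+a$. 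The conjecture is thus equivalent to the assertion that the additive loop $(N,+)$ is a group, i.e. that the set of translations $T=\{\tau_a:a\in N\}$ is closed under composition; in that case $T$ is automatically a regular normal subgroup of $\Gamma$, the semidirect decomposition is honest, and $N$ is a genuine near-field.

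Second, I would produce $T$ and establish everything about it short of closure. The engine here is the involutions of $\Gamma$. One checks that every involution has at most one fixed point, that all involutions are conjugate, and that $\Gamma$ splits into two regimes according to the value of $\pchar(\Gamma)$: either every involution is fixed-point-free ($\pchar(\Gamma)=2$) or every point is fixed by a unique involution ($\pchar(\Gamma)\neq 2$). In the latter regime the candidate translations are exactly the identity together with the fixed-point-free products $ij$ of two involutions, and I would verify that $T$ is invariant under $H$-conjugation and acts regularly on $\Omega$, so that the only missing property is that $\tau_a\tau_b$ is again a translation for all $a,b$. The regime $\pchar(\Gamma)=2$ lacks such an abundance of involutions and would have to be treated by a wholly separate argument, which is one reason the blanket hypothesis $\pchar(\Gamma)\neq 2$ appears in the theorem actually proved here.

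The hard part --- and the reason the conjecture has resisted proof --- is precisely this closure, equivalently the associativity of $+$; the difficulty is genuinely algebraic, not combinatorial. In the finite case it is free: $\Gamma$ is then a Frobenius group with complement $H$, and Frobenius's theorem supplies the Frobenius kernel as the required regular normal subgroup, whence $T$ is a group. The whole obstacle in the infinite setting is that Frobenius's theorem has no general analogue, as there exist fixed-point-free automorphism actions admitting no invariant kernel. My strategy would therefore be to manufacture a substitute for finiteness out of whatever rigidity $\Gamma$ possesses: realize $H$ concretely, show that pairwise products of translations satisfy enough polynomial-type relations to force $\tau_a\tau_b\in T$, and bound the defect of additive associativity until it must vanish. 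This is exactly the point at which the linearity and characteristic hypotheses of the present theorem enter: one exploits the algebraic-group structure of $\GL_n(F)$ together with $\Char(F)\neq 2$ and $\pchar(\Gamma)\neq 2$ to force the involutions to behave as in the finite case and so to push the final closure step through. I expect that without some such external input the closure step cannot be established, which is consistent with the statement remaining a conjecture in full generality.
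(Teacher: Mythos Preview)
The statement you are addressing is a \emph{conjecture}; the paper does not prove it, only the special case (Theorem~\ref{thm1}) in which $\Gamma$ is linear. Your proposal is accordingly not a proof either: you correctly trace the classical reduction to a near-domain and identify the associativity of $+$ (equivalently, closure of the translation set $T$) as the one missing axiom, but you then concede that you cannot close this gap without ``some such external input'' and leave the matter open. That is an honest summary of the state of the problem, not a proof.

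Where your sketch genuinely diverges from the paper is in the strategy for the linear case. You propose to attack associativity head-on, realizing $H$ concretely and squeezing $\tau_a\tau_b\in T$ out of polynomial relations among involutions. The paper never touches the near-domain formalism and never tries to prove associativity. Instead it argues by contradiction at the level of algebraic groups: if $\Gamma$ has no nontrivial abelian normal subgroup, one may arrange (Proposition~\ref{7.1}) that the Zariski closure $G=\overline{\Gamma}^Z$ is reductive. The unique involution $\sigma\in\Gamma_\omega$ is semisimple since $\Char(F)\neq 2$, so $C=C_G(\sigma)$ is reductive and contains $H=\overline{\Gamma_\omega}^Z$. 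Sharp $2$-transitivity of $\Gamma$ forces $\gtd(G:G/H)\geq 2$ (Proposition~\ref{orig}), hence $\gtd(G:G/C)\geq 2$ by Corollary~\ref{p2c}, contradicting the Popov--Brundan result (Theorem~\ref{lp}) that a reductive group acting on a reductive quotient has $\gtd=1$. Thus a nontrivial abelian normal subgroup $N\unlhd\Gamma$ exists; it is automatically fixed-point free, and Theorem~\ref{thm_dm} then supplies the near-field directly. The regular normal subgroup is therefore located \emph{before} any algebraic structure is put on $\Omega$, sidestepping the associativity question altogether rather than resolving it.
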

The conjecture was confirmed by H. Zassenhaus for finite sharply $2$-transitive groups in a 1936 paper. Later in \cite{Z1} he completed the classification of all finite sharply 2-transitive groups by classifying all finite near fields. The aforementioned classification contains interesting examples as, contrary to the situation for skew fields, there are non-trivial examples of finite near fields: the so called Dickson near-fields as well as seven other sporadic cases. In the infinite case, much less has been done. In \cite{T2} Tits proved the conjecture for locally compact connected sharply $2$-transitive groups. In that case too all near fields are classified and in particular they are all of finite rank over $\R$.

An important first step is to determine the characteristic directly from the structure of the permutation group. To every sharply 2-transitive group $\Gamma$ one associates (see definition \ref{def_prim}) its permutational characteristic, $\pchar(\Gamma)$, which is equal to $\Char(N)$ whenever the conjecture above holds. Our main theorem settles the above conjecture under the assumption that $\Gamma$ is a linear group. Unfortunately we do have to exclude the case of characteristic $2$, in two different ways. 
\begin{theorem} \label{thm1}
 Let $F$ be a field and let $\Gamma\leq\GL_n(F)$ be a sharply $2$-transitive group. Assume that $\Char(F)\neq2$ and that $\pchar (\Gamma) \neq2$. Then $\Gamma\cong N^\times\ltimes N$, where $N$ is a near-field.
\end{theorem}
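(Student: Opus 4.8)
The plan is to produce a normal subgroup $T\trianglelefteq\Gamma$ that acts regularly on $\Omega$; this is equivalent to the statement of the theorem, since given such a $T$ one obtains the near-field by taking $N=(T,+)$ with the group law as addition, $N^{\times}=\Gamma_{o}$ for a chosen base point $o\in\Omega$, and multiplication transported from the conjugation action of $\Gamma_{o}$ on $T$ via the bijection $\Gamma_{o}\cong T\smallsetminus\{e\}$ given by sharp $2$-transitivity, while conversely $N^{\times}\ltimes N$ always has its $N$-part as a regular normal subgroup. I would start by recording the structure forced by $\pchar(\Gamma)\neq2$: $\Gamma$ contains involutions; each involution has a unique fixed point, so $i\mapsto\operatorname{Fix}(i)$ is a $\Gamma$-equivariant bijection of the set $\mathcal I$ of involutions onto $\Omega$ (conjugation on one side, the given action on the other); all involutions are conjugate; distinct involutions never commute; and the product of two distinct involutions is fixed-point-free. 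A short argument then shows that the normal subgroup $M=\gen{\mathcal I}$ acts on $\Omega$ as a Frobenius group, and that $M$ has a Frobenius kernel precisely when $\Gamma$ has a regular normal subgroup; so the theorem reduces to showing that the fixed-point-free elements of $\Gamma$, together with $e$, form a subgroup. If $\Omega$ is finite this is Zassenhaus's theorem quoted in the introduction, so from now on $\Omega$, $\Gamma$ and $F$ are infinite.

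The linear hypothesis enters through the involutions. Since $\Char(F)\neq2$, every involution $i\in\Gamma$ is semisimple with eigenvalues $\pm1$, giving $F^{n}=V_{i}^{+}\oplus V_{i}^{-}$; by conjugacy the dimensions are constant, and both are nonzero (an involution equal to $\pm I$ would be central, hence have a $\Gamma$-invariant, so empty or full, fixed-point set on $\Omega$). Fix $i_{0}\in\mathcal I$ with fixed point $o$; one checks $\Gamma_{o}=C_{\Gamma}(i_{0})$ (using that $i_{0}$ is the unique involution of $\Gamma$ fixing $o$), so $\Gamma_{o}$ preserves the splitting $F^{n}=V_{i_{0}}^{+}\oplus V_{i_{0}}^{-}$ and acts linearly on $W:=V_{i_{0}}^{-}$. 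The candidate near-field is $W$ under vector addition: any group acting linearly on an abelian group and freely-transitively on its nonzero vectors turns that group into a near-field having the given group as its multiplicative group. So the heart of the argument is to prove that $\Gamma_{o}$ acts on $W\smallsetminus\{0\}$ freely and transitively — equivalently, that $\Omega\smallsetminus\{o\}$ can be identified $\Gamma_{o}$-equivariantly with $W\smallsetminus\{0\}$, a ``bridge'' between the permutation action on $\Omega$ and the linear action on $F^{n}$. Once this is in place, the involutions $\sigma_{t}$ swapping $o$ with $t$ and the translations $\tau_{t}=\sigma_{t}i_{0}$ can be computed and seen to close up, so that $T=\{\tau_{t}:t\in\Omega\}$ is the required regular normal subgroup and $\Gamma\cong N^{\times}\ltimes N$. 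A parallel, more structural route to the same $T$ passes to the Zariski closure $G=\overline{\Gamma}\leq\GL_{n}(\overline F)$: one shows $G$ is not reductive — a reductive group with a Zariski-dense sharply $2$-transitive subgroup would be of semisimple rank at most one, whose transitive actions on the relevant varieties are $3$-transitive rather than sharply $2$-transitive — so $R_{u}(G)\neq1$, and then $T:=\Gamma\cap R_{u}(G)$ is shown to be nontrivial, normal, consisting (apart from $e$) of fixed-point-free elements and transitive on $\Omega$, with $G/R_{u}(G)$ supplying $N^{\times}$.

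The step I expect to be the main obstacle is precisely this bridge: \emph{a priori} the permutation action on $\Omega$ and the linear action on $F^{n}$ are unrelated, and what must be done is to translate the purely combinatorial information ``$g$ is fixed-point-free on $\Omega$'' into linear-algebraic information on $F^{n}$ (unipotency, or a prescribed eigenvalue/eigenspace pattern) and conversely; making $\Gamma_{o}$ act freely and transitively on $V_{i_{0}}^{-}\smallsetminus\{0\}$, or $\Gamma\cap R_{u}(G)$ act regularly on $\Omega$, is the real content. This is also exactly where the two characteristic hypotheses are indispensable, and for genuinely different reasons: $\Char(F)\neq2$ is what makes an involution diagonalisable, so that it has eigenspaces at all, while $\pchar(\Gamma)\neq2$ is what forces involutions to have unique fixed points — so that $\mathcal I\cong\Omega$ and the bridge exists — and products of distinct involutions to be non-commuting and fixed-point-free, which is what pins down the multiplicative part and, in the structural route, the non-reductivity. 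Secondary difficulties are the exclusion of the degenerate (reductive, or very small $n$) configurations and, if one argues by induction on $n$, keeping track of faithfulness on the subquotients.
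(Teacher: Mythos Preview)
Your structural route --- pass to the Zariski closure $G=\overline{\Gamma}^Z$, show $G$ is not reductive, and extract a normal abelian subgroup from $\Gamma\cap R_u(G)$ --- is exactly the architecture of the paper, but the step where you rule out reductivity has a genuine gap. Your one-line justification, that a reductive $G$ with a Zariski-dense sharply $2$-transitive subgroup ``would be of semisimple rank at most one, whose transitive actions on the relevant varieties are $3$-transitive rather than sharply $2$-transitive,'' is neither precise (on which variety is $G$ acting, and why should rank $\leq 1$ follow?) nor a contradiction once you unpack it: sharp $2$-transitivity is a property of $\Gamma$ on the abstract set $\Omega$, and nothing prevents the closure $G$ from acting $3$-transitively on $G/H$. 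The paper's argument is quite different and rests on a nontrivial theorem. One first shows (Proposition~\ref{orig}) that with $H=\overline{\Gamma_\omega}^Z$ the action $G\curvearrowright G/H$ is \emph{generically} $2$-transitive in the sense of Definition~\ref{def:gtd}, and that $H\leq C_G(\sigma)$ with $\sigma$ semisimple (this is where $\Char(F)\neq 2$ is used), so that $C_G(\sigma)$ is reductive. One then invokes the Popov--Luna--Brundan theorem (Theorem~\ref{lp}): for reductive $C<G$ one always has $\gtd(G:G/C)=1$, contradicting $\gtd(G:G/C)\geq\gtd(G:G/H)\geq 2$ via Corollary~\ref{p2c}. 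Nothing in your proposal substitutes for this input, and the ``rank $\leq 1$'' heuristic does not.

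Your direct route fares worse: the ``bridge'' claim that $\Gamma_o$ acts regularly on $V_{i_0}^{-}\setminus\{0\}$ is, as you yourself say, the entire difficulty, and you give no argument for it. There is no a priori reason the permutation action on $\Omega$ should linearize on an eigenspace of $i_0$; the paper makes no such claim, and once the theorem is proved such a description is a consequence rather than a method. Your identification of $\Gamma_o=C_\Gamma(i_0)$ and of the distinct roles of the two characteristic hypotheses is correct and is used in the paper, but it is setup, not the proof.
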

  The hardest part of the proof is to show that $\Gamma$ splits as a semi-direct product. The fact that in this case the group already satisfies the conjecture follows from Theorem \ref{thm_dm}.
  
The current paper is part of an attempt to implement some methods from finite group theory to the more general setting of linear groups. In this it builds on previous papers \cite{GG},\cite{GG2} in which a version of the Aschbacher-O'Nann-Scott theorem was proved for countable linear groups. In the terminology established in those papers what we show here is that sharply $2$-transitive linear groups (which are automatically primitive) must be of affine type. Eventually though, the current paper does not rely directly on \cite{GG},\cite{GG2} and in fact what we do use from these papers is proved here slightly more generally because we do not assume that all groups are countable or non-torsion.

The authors would like to thank Uri Bader, Tsachik Gelander, Yoav Segev and Barak Weiss for very helpful comments on previous versions of this work. We would like to thank Theo Grundh\"{o}fer and the referee for pointing us, in two different ways, to the fact that the commutativity of addition in a near-field follows easily and thus shortening our proof.

\section{Preliminaries} \label{sec:prelim}
\subsection{Sharply $2$-transitive groups}
\begin{rmk}The basic facts mentioned in this section can be found in \cite{K}. Our proof appears here for the convenience of the reader. 
\end{rmk}
Let $\Omega$ be a set and $\Sym (\Omega)$ be the full group of permutations, acting from the left on the set $\Omega$. If $\Gamma < \Sym(\Omega)$ is a subgroup we will denote by $\Gamma_\omega = \{\gamma \in \Gamma  \ | \ \gamma \omega = \omega\}$ the stabilizer of the point $\omega$ and by $\Gamma \omega = \{\gamma \omega \ | \ \gamma \in \Gamma\}$ the orbit. 

\begin{defi} \label{def_prim} \rm
A permutation group $\Gamma < \Sym(\Omega)$ is called {\it{sharply 2-transitive}} if it acts transitively and freely on ordered pairs of distinct points.
\end{defi}

It follows directly from the definition that such a group contains many involutions: by the transitivity on ordered pairs any two points can be flipped by a group element and the square of such a flip is trivial due to the freeness of this action. The freeness also implies that an involution is uniquely determined by any pair of points that it flips. Thus the set $\Inv(
\Gamma)$, forms a conjugacy class. Indeed if $\sigma$ flips $x,y \in \Omega$ and $\tau$ flips $z,w \in \Omega$ then any element   satisfying $(\gamma z , \gamma w)=(x,y)$ will serve as a conjugating element $\sigma^{\gamma} := \gamma^{-1} \sigma \gamma = \tau$. Consequently we distinguish between two separate cases, either every involution stabilizes a point or none does. Moreover:

\begin{lemma} \label{u.f.i}
 If $\Gamma$ is a sharply $2$-transitive group then $\Gamma_\omega$ contains at most one involution.
\end{lemma}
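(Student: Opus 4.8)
The plan is first to dispose of the easy case. If no involution of $\Gamma$ stabilizes a point, then $\Gamma_\omega$ contains no involution and there is nothing to prove; so we may assume $\Gamma_\omega$ contains an involution $i$, and argue by contradiction, assuming there is a second involution $j\ne i$ in $\Gamma_\omega$. Two elementary remarks set the stage. Since $\Gamma$ acts freely on pairs, any nontrivial element fixes at most one point; hence $i,j$ fix $\omega$ and only $\omega$. Moreover $n:=ij$ is nontrivial (otherwise $i=j$), it fixes $\omega$ and only $\omega$, and $ini^{-1}=iji\cdot i=ji=n^{-1}$, and symmetrically $jnj^{-1}=n^{-1}$; thus each of $i$ and $j$ inverts $n$ by conjugation. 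The contradiction I aim for is to exhibit a nontrivial $k\in\Gamma$ with $k\omega\ne\omega$ and $kn=nk$: then $n$ fixes both $\omega$ and $k\omega\ne\omega$, impossible for a nontrivial element.

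To produce such a $k$ I would fix a point $p\ne\omega$ and work inside the finite point configuration $\{\,p,\ ip,\ jp,\ n^{\pm1}p\,\}$ together with $\omega$. Using sharp $2$-transitivity one can name the unique ``flip'' involutions of the various pairs of these points, and the unique elements carrying one ordered pair to another; the behaviour of $i,j,n$ on this configuration is then completely forced by the identities $i\cdot n=j$, $n^{-1}=ji$, $ini^{-1}=n^{-1}$, etc. (for instance, when $\langle i,j\rangle$ is a Klein four-group one gets $ni=j$, so that $n$ flips $\{ip,jp\}$). Chasing these constraints one locates an element $g$ which moves $\omega$ but some power of which is forced into $\Gamma_\omega$ — the desired contradiction. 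Conceptually this is just the classical fact, proved here from the definition rather than from finite Frobenius theory, that an involution in a point stabiliser inverts the ``translation part'', so that a product of two such involutions would centralise a nontrivial translation. It is also worth recording the structural reformulation behind all this: because $\Inv(\Gamma)$ is a single $\Gamma$-conjugacy class and any $\Gamma$-conjugate of $i$ that still fixes $\omega$ is already a $\Gamma_\omega$-conjugate of $i$ (since $\operatorname{Fix}(gig^{-1})=g\operatorname{Fix}(i)=\{g\omega\}$), the number of involutions in $\Gamma_\omega$ equals $[\Gamma_\omega:C_{\Gamma_\omega}(i)]$; so the lemma is equivalent to saying that the involution in $\Gamma_\omega$, when it exists, is central in $\Gamma_\omega$, i.e. $C_\Gamma(i)=\Gamma_\omega$.

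The main obstacle is precisely the last step of the first paragraph's plan, namely genuinely producing $k$ (equivalently, proving centrality) without any finiteness assumption. The delicate point inside it is the case $n^2=e$, i.e. $\langle i,j\rangle\cong(\Z/2)^2$: there the naive ``reduction'' replacing $\{i,j\}$ by $\{i,n\}$ stays inside the same case, so it cannot be removed by iterating the elementary observations above and really requires the explicit configuration argument. Everything else — the reduction to the case where every involution fixes a point, and the bookkeeping of flips — is routine.
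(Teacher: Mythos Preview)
Your proposal is not a proof: you explicitly flag that the decisive step --- producing a nontrivial $k\notin\Gamma_\omega$ commuting with $n=ij$ --- is missing, and the ``configuration chase'' you sketch does not supply it. The alternative conclusion you describe (locating $g$ with $g\omega\ne\omega$ but some power of $g$ in $\Gamma_\omega$) is not in itself a contradiction; every element has the identity as a power, and nothing you have set up forbids a nontrivial power of such $g$ from lying in $\Gamma_\omega$. Finally, the appeal to a ``translation part'' that involutions invert presupposes exactly the split near-field structure the whole paper is trying to establish, so it cannot be invoked at this stage. Your reformulation that the lemma is equivalent to centrality of $i$ in $\Gamma_\omega$ is correct and pleasant, but it is a restatement, not a proof.

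The paper's argument is much more direct and sidesteps all of this. Take involutions $\sigma,\tau\in\Gamma_\omega$ and any $y\ne\omega$. By sharp $2$-transitivity there is a (unique) $\gamma\in\Gamma_y$ with $\gamma(\tau y)=\sigma y$. Then $\sigma^{\gamma}=\gamma^{-1}\sigma\gamma$ is an involution with $\sigma^{\gamma}(y)=\gamma^{-1}\sigma(\gamma y)=\gamma^{-1}(\sigma y)=\tau y$; hence $\sigma^{\gamma}$ and $\tau$ agree on the distinct points $y$ and $\tau y$, so $\sigma^{\gamma}=\tau$. Applying this to $\omega$ gives $\sigma(\gamma\omega)=\gamma\omega$; since $\sigma$ already fixes $\omega$ and a nontrivial element fixes at most one point, $\gamma\omega=\omega$. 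Thus $\gamma$ fixes both $y$ and $\omega$, so $\gamma=1$ and $\sigma=\tau$. No case analysis on the order of $ij$, no Klein-four subtlety, and no need to manufacture external centralising elements.
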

\begin{proof}
 Assume $\sigma,\tau \in \Gamma_\omega$ are involutions. Fix any $y \neq \omega$. Since $\Gamma$ is sharply $2$-transitive, there exists $\gamma \in \Gamma_y$ such that $\gamma (\tau y)=\sigma y$. Then we have $\sigma^\gamma y = \gamma^{-1} \sigma y=\tau y$. Thus $\tau =\sigma^{\gamma}$, as they coincide on $y$ and $\tau y$, which implies $\omega=\tau \omega =\gamma^{-1} \sigma \gamma \omega$. Thus $\sigma (\gamma \omega)=\gamma \omega$. Hence $\sigma \in \Gamma_\omega \cap \Gamma_{\gamma \omega}$. By the ``sharply" assumption, this implies $\gamma \omega=\omega$. Thus $\gamma$ fixes both $\omega,y$, which means that $\gamma=1$. \qedhere
\end{proof}
We turn to the situation where $\Gamma_\omega$ contains an involution. It will turn out that $\Inv(\Gamma)^2 \setminus \{ 1 \}$ is a conjugacy class, which, in turn, accounts for the following proposition that will be important to us.

\begin{prop} \label{prop_per}
Let $\Gamma$ be a sharply $2$-transitive group and assume that $\Gamma_\omega$ contains an involution. Then all elements in $\Inv (\Gamma)^2 \setminus \{ 1 \}$ share the same order which is either $\infty$ or a prime number $p$. 
\end{prop}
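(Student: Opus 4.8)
The plan is to first show that $\Inv(\Gamma)^2\setminus\{1\}$ is a single conjugacy class of $\Gamma$, which instantly produces a common order, and then to exclude composite finite orders by a short dihedral-type computation.

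\emph{Reducing to one conjugacy class.} Since $\Gamma_\omega$ contains an involution and $\Inv(\Gamma)$ is a single conjugacy class, every point stabilizer contains an involution, and by Lemma~\ref{u.f.i} exactly one; denote by $\sigma_\omega$ the unique involution in $\Gamma_\omega$. For $\gamma\in\Gamma$ the element $\gamma\sigma_\omega\gamma^{-1}$ is an involution fixing $\gamma\omega$, so by uniqueness $\gamma\sigma_\omega\gamma^{-1}=\sigma_{\gamma\omega}$; thus $\omega\mapsto\sigma_\omega$ is a $\Gamma$-equivariant bijection of $\Omega$ onto $\Inv(\Gamma)$. Every element of $\Inv(\Gamma)^2\setminus\{1\}$ is a product $\sigma_x\sigma_y$ of two involutions, which are distinct because the product is nontrivial, hence $x\neq y$; conversely every such $\sigma_x\sigma_y$ lies in $\Inv(\Gamma)^2\setminus\{1\}$. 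Equivariance gives $\gamma(\sigma_x\sigma_y)\gamma^{-1}=\sigma_{\gamma x}\sigma_{\gamma y}$, so the conjugation action of $\Gamma$ on $\Inv(\Gamma)^2\setminus\{1\}$ is carried by the map $(x,y)\mapsto\sigma_x\sigma_y$ from the action on ordered pairs of distinct points, which is transitive by $2$-transitivity. Therefore $\Inv(\Gamma)^2\setminus\{1\}$ is a single conjugacy class, and all its elements share one order $n\in\{2,3,\dots\}\cup\{\infty\}$.

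\emph{Excluding composite finite orders.} Assume $n<\infty$ and let $p$ be a prime divisor of $n$; it suffices to show $n=p$. Fix $g=\sigma\tau\in\Inv(\Gamma)^2\setminus\{1\}$ with $\sigma\neq\tau$ involutions, and note $\sigma g\sigma^{-1}=\sigma\sigma\tau\sigma=\tau\sigma=g^{-1}$, so $\sigma g^k\sigma^{-1}=g^{-k}$ for every $k$. Put $m=n/p$ and suppose, for contradiction, that $m\geq2$. Set $\alpha=g^{m-1}\sigma$; then $\alpha^2=g^{m-1}(\sigma g^{m-1}\sigma^{-1})=g^{m-1}g^{-(m-1)}=1$. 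If $\alpha\neq1$, then $\alpha$ is an involution and $g^m=g^{m-1}\sigma\tau=\alpha\tau\in\Inv(\Gamma)^2$, with $g^m\neq1$ because $m<n$; by the previous step $\ord(g^m)=n$, contradicting $\ord(g^m)=\ord(g^{n/p})=p<n$. If instead $\alpha=1$, then $g^{m-1}=\sigma^{-1}=\sigma$ is an involution, so $n\mid 2(m-1)$, which is impossible since $0<2(m-1)<2m\leq pm=n$. Hence $m=1$ and $n=p$ is prime.

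The first part is bookkeeping once one spots the equivariant bijection $\omega\mapsto\sigma_\omega$; the real obstacle is the second part, whose engine is the relation $\sigma g\sigma^{-1}=g^{-1}$. This makes each power $g^k$ again a product of two involutions, via $g^k=(g^{k-1}\sigma)\tau$, and the rigidity of the conjugacy class then forces its order to equal the fixed value $n$, which is consistent only with $n$ prime.
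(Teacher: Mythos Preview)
Your proof is correct and follows essentially the same route as the paper: first identify $\Inv(\Gamma)^2\setminus\{1\}$ with a single conjugacy class via the $\Gamma$-equivariant bijection $\omega\mapsto\sigma_\omega$ (the paper packages this as Proposition~\ref{charn2}), then use the dihedral relation $\sigma g\sigma^{-1}=g^{-1}$ to write a suitable power of $g=\sigma\tau$ again as a product of two involutions and derive a contradiction from its smaller order. The only cosmetic difference is that the paper exhibits every power $(\sigma\tau)^m$ as a product of two involutions by an explicit odd/even formula, whereas you target just $g^{n/p}$ and carefully dispose of the degenerate case $g^{m-1}\sigma=1$; both arguments are the same idea.
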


We will call an element of a permutation group {\it{fixed-point free}} if it does not fix a point. A subgroup  is called {\it{fixed-point free}} if all its non-identity elements are fixed-point free.

\begin{lemma} \label{inv_reg}
If $\Gamma$ is a sharply $2$-transitive group then all elements in $\Inv (\Gamma)^2 \setminus \{ 1 \}$ are fixed-point free.
\end{lemma}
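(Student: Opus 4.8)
The plan is to argue by contradiction. Suppose $\rho=\sigma\tau\in\Inv(\Gamma)^2\setminus\{1\}$, with $\sigma,\tau\in\Inv(\Gamma)$, and suppose that $\rho$ fixes some point $\omega\in\Omega$. Since $\sigma$ is an involution, the relation $\sigma\tau\omega=\omega$ rewrites as $\tau\omega=\sigma^{-1}\omega=\sigma\omega$; set $\omega'=\sigma\omega=\tau\omega$. The whole point is that $\sigma$ and $\tau$ now act identically on the (possibly degenerate) pair $\{\omega,\omega'\}$, so that sharp $2$-transitivity forces $\sigma=\tau$, and hence $\rho=\sigma^2=1$, contradicting $\rho\neq1$.

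First I would handle the generic case $\omega'\neq\omega$. Here $\sigma$ sends $\omega\mapsto\omega'$ and, being an involution, $\omega'\mapsto\omega$; the same holds for $\tau$. Thus $\sigma$ and $\tau$ agree on the ordered pair of distinct points $(\omega,\omega')$, and the freeness of the action of $\Gamma$ on ordered pairs of distinct points gives $\sigma=\tau$ at once.

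The only case requiring separate attention — and the only place a previous result enters — is $\omega'=\omega$, i.e.\ $\sigma\omega=\omega$. Then also $\tau\omega=\omega$, so $\sigma$ and $\tau$ are both involutions lying in the stabilizer $\Gamma_\omega$. By Lemma \ref{u.f.i}, $\Gamma_\omega$ contains at most one involution, whence $\sigma=\tau$ again. In either case $\rho=\sigma\tau=\sigma^2=1$, contradicting $\rho\in\Inv(\Gamma)^2\setminus\{1\}$, so $\rho$ must be fixed-point free.

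I do not expect any genuine obstacle here: the lemma is essentially a direct unwinding of the word ``sharply'' (freeness on pairs of distinct points), and the only subtlety is remembering to dispose of the degenerate possibility that the purported fixed point is also fixed by $\sigma$ — which is precisely what Lemma \ref{u.f.i} settles. One could alternatively phrase the argument using the fact that an involution is determined by any pair of points it flips, but one would still need to rule out two involutions in a single point-stabilizer, so invoking Lemma \ref{u.f.i} for the degenerate case seems cleanest.
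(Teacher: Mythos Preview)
Your argument is correct and is essentially identical to the paper's own proof: both rewrite $\sigma\tau\omega=\omega$ as $\sigma\omega=\tau\omega$, then split into the cases $\sigma\omega=\omega$ (handled via Lemma~\ref{u.f.i}) and $\sigma\omega\neq\omega$ (handled by freeness on ordered pairs of distinct points).
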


\begin{proof}
Let $\sigma,\tau \in \Inv(\Gamma)$. Assume that there exists $\omega \in \Omega$ such that $\sigma\tau \in \Gamma_\omega$, i.e $\sigma \omega=\tau \omega$. Hence, either $\sigma \omega =\tau \omega=\omega$ which is equivalent to $\sigma, \tau \in \Gamma_\omega$, and by Lemma \ref{u.f.i}, $\sigma=\tau$. Or $\sigma \omega =\tau \omega=y \neq \omega$. Then $\sigma, \tau$ coincide on two distinct points, $\omega$ and $y$, hence are equal. \qedhere
\end{proof}

\begin{prop} \label{charn2}
 Let $\Gamma$ be a sharply $2$-transitive group and assume $\Gamma_\omega$ contains an involution. Then the given sharply 2-transitive action of $\Gamma$ is isomorphic to the action of $\Gamma$ on $\Inv (\Gamma)$ by conjugation.
\end{prop}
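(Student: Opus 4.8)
The plan is to construct an explicit isomorphism of $\Gamma$-sets between $\Omega$ and $\Inv(\Gamma)$ equipped with the conjugation action $\tau \mapsto \gamma\tau\gamma^{-1}$. Since $\Gamma$ is sharply $2$-transitive it is in particular transitive on $\Omega$, and point stabilizers of a transitive action are conjugate, with $\Gamma_{\gamma\omega} = \gamma \Gamma_\omega \gamma^{-1}$. As conjugation carries involutions to involutions, the hypothesis that some $\Gamma_\omega$ contains an involution forces \emph{every} point stabilizer to contain one, and by Lemma \ref{u.f.i} exactly one. This lets me define $\Phi\colon \Omega \to \Inv(\Gamma)$ by letting $\Phi(\omega)$ be the unique involution $\sigma_\omega$ lying in $\Gamma_\omega$.

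Next I would check that $\Phi$ intertwines the two actions. For $\gamma \in \Gamma$, the element $\gamma \sigma_\omega \gamma^{-1}$ is an involution and it fixes $\gamma\omega$, so by the uniqueness just established it must coincide with $\sigma_{\gamma\omega}$; that is, $\Phi(\gamma\omega) = \gamma\, \Phi(\omega)\, \gamma^{-1}$, which is precisely $\Gamma$-equivariance.

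It then remains to see that $\Phi$ is a bijection, and for this the key auxiliary fact is that every involution of $\Gamma$ fixes exactly one point of $\Omega$. It fixes at least one: $\Inv(\Gamma)$ is a single conjugacy class (as observed right after Definition \ref{def_prim}), so an arbitrary involution has the form $\gamma \sigma_\omega \gamma^{-1}$ and hence fixes $\gamma\omega$; it fixes at most one, since any non-identity element fixing two distinct points would violate freeness on pairs. Granting this, injectivity of $\Phi$ is immediate — if $\sigma_\omega = \sigma_{\omega'}$ then this involution fixes both $\omega$ and $\omega'$, whence $\omega = \omega'$ — and so is surjectivity, since an involution $\tau$ fixes a unique point $\omega$ and therefore $\tau = \sigma_\omega = \Phi(\omega)$. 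Combining the three points, $\Phi$ is a $\Gamma$-equivariant bijection, i.e. an isomorphism between the given action and the conjugation action on $\Inv(\Gamma)$.

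I do not expect a genuine obstacle here: the proposition is essentially a repackaging of Lemma \ref{u.f.i}, freeness on pairs, and the conjugacy-class observation. The one point that needs a little care is that the hypothesis supplies an involution in only a single stabilizer, so one must use transitivity together with the conjugacy of involutions to propagate the ``one involution per point, one point per involution'' dictionary uniformly to all of $\Omega$ and all of $\Inv(\Gamma)$ at once.
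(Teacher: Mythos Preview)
Your argument is correct, and it identifies the same underlying map $\omega \mapsto \sigma_\omega$ that the paper uses implicitly, but the verification proceeds along a different line. The paper argues abstractly: both actions are transitive, so it suffices to match a single stabilizer; since $\sigma_\omega$ is the unique involution in $\Gamma_\omega$ it is centralized by $\Gamma_\omega$, giving $\Gamma_\omega \le C_\Gamma(\sigma_\omega)$, and then \emph{primitivity} of a $2$-transitive action forces $\Gamma_\omega$ to be maximal, hence $\Gamma_\omega = C_\Gamma(\sigma_\omega)$. Your route avoids primitivity entirely by establishing the ``one involution per point, one point per involution'' dictionary directly from freeness on pairs and the conjugacy-class observation, and then reading off bijectivity of $\Phi$. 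Your version is slightly more elementary and yields the explicit inverse (send $\tau$ to its unique fixed point); the paper's version is shorter and, as a byproduct, records the useful identity $\Gamma_\omega = C_\Gamma(\sigma_\omega)$, which is exactly what gets used later in Proposition~\ref{orig}.
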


\begin{proof}
Let $\Gamma \action \Omega$ be a sharply 2-transitive action.
Since $\Inv(\Gamma)$ forms one conjugacy class the action $\Gamma \action \Inv (\Gamma)$ is transitive.
Now, it is sufficient to show that the stabilizers of points in both actions are equal, i.e $\forall \omega \in \Omega$ $\exists \sigma \in \Inv (\Gamma)$ such that $\Gamma_\omega = \Gamma_\sigma$, and vice versa. Take any $\omega \in \Omega$. By assumption on $\Gamma$ and by Lemma \ref{u.f.i}, the stabilizer contains a unique involution $\sigma$ which is centralized by $\Gamma_\omega$. Since $2$-transitive actions are primitive, $\Gamma_\omega$ is a maximal subgroup, so we have $\Gamma_\omega = \Gamma_\sigma$. \qedhere
\end{proof}

\begin{proof}[Proof of Proposition \ref{prop_per}]
The fact that $\Inv (\Gamma)^2 \setminus \{ 1 \}$ is one conjugacy class, follows immediately from Proposition \ref{charn2}.
$\Inv (\Gamma)^2$ is closed under powers: Take any $\sigma \tau \in \Inv (\Gamma)^2 \setminus \{ 1 \}$ and $m \in \N$. 
 If $m$ is odd then $(\sigma \tau)^m = (\sigma \tau)^{\frac{m-1}{2}} \sigma (\tau \sigma)^{\frac{m-1}{2}} \tau$ is once again a product of two involutions. 
 If $m$ is even then $(\sigma \tau)^m = \left( (\sigma \tau)^{\frac{m}{2}-1}\sigma \right) \tau \left( \sigma (\tau \sigma)^{\frac{m}{2}-1} \right) \tau$ is a product of two involutions.
 
We have just proved that all elements in $\Inv (\Gamma)^2 \setminus \{ 1 \}$ share the same order. If that order is finite, denote it by $n$. Assume, by contradiction, that $n=p \cdot q$ is composite ($p,q \neq 1$). Then, $(\sigma \tau)^q$ is another element in $\Inv (\Gamma)^2 \setminus \{ 1 \}$, whose order is $p \neq n$, which is a contradiction. \qedhere
\end{proof}

\noindent The previous proposition leads us to the definition of the following invariant:

\begin{defi} \rm Let $\Gamma$ be a sharply $2$-transitive group. The {\it{permutation characteristic}} of $\Gamma$, denoted by $\pchar(\Gamma)$, is defined as follows:
$$\pchar(\Gamma)=\left\{ \begin{array}{ c l }
	2\hspace{8pt} & \Gamma_\omega\cap\Inv(\Gamma)=\emptyset\\
	p\hspace{8pt} & \Gamma_\omega\cap\Inv(\Gamma)\neq\emptyset, \hspace{10 pt} \forall \gamma\in\Inv(\Gamma)^2: \hspace{8 pt} \ord(\gamma)=p\\
	0\hspace{8pt} & \Gamma_\omega\cap\Inv(\Gamma)\neq\emptyset, \hspace{10 pt} \forall \gamma\in\Inv(\Gamma)^2: \hspace{8 pt} \ord(\gamma)=\infty
\end{array}\right.$$
\end{defi}

\begin{rmk}
If $\Gamma_\omega\cap\Inv(\Gamma)\neq\emptyset$ then $\pchar(\Gamma)\neq2$ since if an element of $\Inv(\Gamma)^2$ is an involution, then it must fix a point, which contradicts Lemma \ref{inv_reg}.
\end{rmk}

\begin{ex} \label{ex_aff} \rm
Let $N$ be a division ring and let $\Gamma = N^{\times} \ltimes N$ with the multiplication defined by $(a,b)\cdot(a',b')=(aa',ab'+b)$. Observe the action $\Gamma \action N$ defined by $(a,b)x=ax+b$. This action is sharply $2$-transitive since for every two pairs of distinct points $x \neq y, z \neq w$ in $N$ we can find a unique element $(a,b) \in \Gamma$ such that $ax+b=z$ and $ay+b=w$ by solving a system of two linear equations. Now, let us  describe $\Inv \left( \Gamma \right)$: $(a,b) \in \Inv (\Gamma)$ $\Leftrightarrow$ $(a,b)^2=(a^2,ab+b)=(1,0)$. Hence $a\in \{ 1,-1 \}$.
 If $a=1$ then $2b=0$. Hence, if $\Char(N)=2$ then any $b \neq 0$ will give us a non-trivial involution.
   Else, $\Char(N)\neq 2$ implying $b=0$, which gives us the identity.
 If $a=-1$ then any $b$ will give us a non-trivial involution (except for $b=0$ if $\Char(N)=2$).
So $\Inv (\Gamma) = \{ (-1,b) | b \in N \}$. Now take the product of two distinct involutions: $(-1,b)\cdot (-1,c) = (1,b-c)$. Hence $((-1,b)\cdot (-1,c))^n = (1,b-c)^n=\left(1,n(b-c)\right)$ which is equal to $(1,0)$ if and only if $n(b-c)=0$. Since $b \neq c$, we must have $n=0$, which implies that  $\pchar(\Gamma) = \Char(N)$
\end{ex}
Note that in the example $\pchar(\Gamma) = \Char(N)$. However, even if $\Gamma < \GL_n(k)$ is a linear group this number need not coincide with the characteristic of the field. Indeed if $\Gamma$ is a finite sharply two transitive group it can be realized as a linear group over any field. 

The main Conjecture \ref{conj:main} claims that, upon allowing $N$ to be a near field rather than a division ring, the above construction yields all sharply two transitive groups. 

\subsection{Near-Fields}
Observing Example \ref{ex_aff} more closely, one sees that for the action $\Gamma = N^{\times} \ltimes N \action N$ remains sharply 2-transitive, even if $N$ is not required to satisfy right distributivity. In other words, $N$ only has to be a near-field.
\begin{defi} \label{NF} \rm
A $5$-tuple $\langle N,+,\cdot,0,1 \rangle$ is called a {\it{near-field}} if it satisfies the following $4$ axioms:
\begin{description}
\item[NF1:] $\langle N,+,0 \rangle$ is an abelian group.
\item[NF2:] $\langle N \setminus \{ 0 \},\cdot,1 \rangle$ is a group.
\item[NF3:] For all $a,b,c \in N$ we have $a \cdot (b+c) = a \cdot b + a \cdot c$.
\item[NF4:] $a \cdot 0=0 \cdot a=0$ For all $a \in N$.
\end{description}
\end{defi}

\begin{rmk}
 This structure is sometimes called a left near-field. One can define a right near-field by replacing {\textbf{NF3}} with right distributivity.
\end{rmk}
\begin{rmk}\label{nab}
 The assumption of commutativity in {\textbf{NF1}} is non-essential, since it follows from the other axioms. This was first proved for finite near-fields by Zassenhaus in \cite{Z2}, and later for infinite near-fields by B.H. Neumann in \cite{N}.
\end{rmk}

\begin{theorem}\rm{\cite[Theorem 7.6C]{DM}} \label{thm_dm}
 Let $| \Omega| \geq 2$ and let $G \leq \Sym(\Omega)$ be a sharply $2$-transitive group which possesses a fixed-point free normal abelian subgroup $K$. Then there exists a near-field $N$ such that $G$ is permutation isomorphic to $N^\times \ltimes N$.
\end{theorem}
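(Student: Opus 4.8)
The plan is to show that the hypotheses force the given action to be permutation isomorphic to the affine action of Example~\ref{ex_aff} for a near-field built directly on $K$.

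\emph{Step 1: $K$ acts regularly, and $G=K\rtimes G_o$.} Since a $2$-transitive action is primitive, the normal subgroup $K$ (which we take to be nontrivial, as the statement concerns a \emph{regular} normal subgroup) acts transitively on $\Omega$; being fixed-point free it acts semiregularly, hence regularly. Fix a base point $o\in\Omega$ and identify $\Omega$ with $K$ via $k\mapsto k\cdot o$, writing $(K,+)$ additively with $o\leftrightarrow 0$; then $K$ acts on itself by translation $k\colon x\mapsto k+x$. Because $K$ is regular and normal, $G=K\rtimes H$ with $H:=G_o$, and $K\cap H=\{1\}$. Moreover, for $h\in H$ the permutation it induces on $\Omega=K$ is $x\mapsto hxh^{-1}$ (using $h\cdot o=o$), which is an automorphism of $(K,+)$ since $K\trianglelefteq G$. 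Finally, sharp $2$-transitivity of $G$ says precisely that $H$ acts \emph{regularly}, in particular \emph{freely}, on $\Omega\setminus\{o\}=K\setminus\{0\}$.

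\emph{Step 2: the near-field structure on $N:=K$.} Fix any $e\in K\setminus\{0\}$ and, for each $a\in K\setminus\{0\}$, let $\phi_a\in H$ be the unique element with $\phi_a\cdot e=a$ (regularity of $H$ on $K\setminus\{0\}$). Define a multiplication by $a\cdot b:=\phi_a\cdot b$ for $a\neq 0$, and $0\cdot b:=0$; set $1:=e$. Then: \textbf{NF1} is the given abelian group structure; \textbf{NF4} holds because each $\phi_a$ fixes $0$ (and $0\cdot a=0$ by definition); \textbf{NF3} holds because each $\phi_a$ is an automorphism of $(K,+)$ by Step~1; and for \textbf{NF2}, closure of $N\setminus\{0\}$ under $\cdot$ is clear, while the left identity ($1\cdot b=b$, i.e.\ $\phi_e=\mathrm{id}$), associativity ($\phi_{\phi_a\cdot b}=\phi_a\circ\phi_b$, hence $(a\cdot b)\cdot c=a\cdot(b\cdot c)$), and invertibility all follow from the observation that two elements of $H$ agreeing at $e$ must coincide — i.e.\ from the \emph{freeness} of the $H$-action in Step~1.

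\emph{Step 3: conclusion.} The map $H\to N^\times$, $h\mapsto h\cdot e$, is a group isomorphism, and $\phi_{h\cdot e}=h$, so the permutation $h$ on $K$ is $x\mapsto(h\cdot e)\cdot x$, namely left multiplication in $N$. Hence a general $g=kh\in G=K\rtimes H$ acts on $\Omega=N$ by $x\mapsto (h\cdot e)\,x+k$, which is exactly the affine action of $\big(h\cdot e,\,k\big)\in N^\times\ltimes N$ from Example~\ref{ex_aff}. Therefore $G\cong N^\times\ltimes N$ and the actions are permutation isomorphic. There is no deep obstacle here: the content is simply repackaging the regular normal subgroup and the regular point stabilizer as the data of a near-field, and the one place that genuinely uses \emph{sharpness} of the action (rather than mere $2$-transitivity) is the verification of \textbf{NF2} in Step~2 — associativity and the two-sided multiplicative identity — which rests on the freeness of $G_o$ on $\Omega\setminus\{o\}$.
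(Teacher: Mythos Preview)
Your proof is correct and follows essentially the same construction as the paper's: identify $\Omega$ with $K$ via the regular action, transport the group law of $K$ to get addition, and use the regular action of $G_o$ on $K\setminus\{0\}$ to define multiplication, then verify the near-field axioms and read off the affine form of the action. You are slightly more explicit than the paper in two places --- you argue why $K$ is transitive (via primitivity of a $2$-transitive action) rather than asserting the bijection $k\mapsto k(0)$ outright, and you spell out the semidirect decomposition $G=K\rtimes G_o$ --- but the content and the order of ideas are the same.
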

\noindent For the convenience of the reader, we shall add a sketch of the proof:
\begin{proof}
Set $N=\Omega$. Fix two arbitrary elements in $N$ and denote them by $0,1$. Since $K$ is fixed-point free, the map $k \mapsto k(0)$ defines a bijection between $K$ and $N$. Define addition on $N$ by $k(0)+m(0)=km(0)$, which turns the map above to an isomorphism from $K$ to $\langle N,+ \rangle$. Clearly, {\textbf{NF1}} holds. 
Since $G_0$ is fixed-point free on $\Omega \setminus \{0\}$, now using the map $g \mapsto g(1)$, we can define multiplication on $N^\times=N\setminus\{0\}$ by $g(1) \cdot h(1)=gh(1)$. With this definition, $G_0$ and $\langle N^\times,\cdot \rangle$ are isomorphic, which implies {\textbf{NF2}}. 
Define $a \cdot 0=0 \cdot a=0$ For all $a \in N$. This guarantees {\textbf{NF4}}. \\
$K$ is normal, so we have $k^g \in K$ for all $k \in K$ and $g \in G_0$. Also, $g(1) \cdot k(0)=k^{g^{-1}}(0)$:
\begin{itemize}
 \item If $k=1_G$ then $k^g=1_G$ which implies $g(1) \cdot k(0)=g(1) \cdot 0=0=k^{g^{-1}}(0)$.
 \item If $k \neq 1_G$ then there exists a unique $h \in G_0$ such that $k(0)=h(1)$. Then $g(1) \cdot k(0)=g(1) \cdot h(1)=gh(1)=gk(0)=k^{g^{-1}}(0)$, since $g \in G_0$.
\end{itemize}
Now take any $a,b,c \in N$. 
\begin{itemize}
 \item[$\circ$] If $a=0$ then $0 \cdot (b+c)=0=0+0=0 \cdot b + 0\cdot c$.
 \item[$\circ$] If $a \neq 0$ then $a=g(1),b=k(0),c=m(0)$ where $g \in G_0$ and $k,m \in K$. We have $$a \cdot (b+c)=g(1) \cdot (k(0)+m(0))=g(1) \cdot km(0)=(km)^{g^{-1}}(0)=k^{g^{-1}}m^{g^{-1}}(0)$$ $$=k^{g^{-1}}(0)+m^{g^{-1}}(0)=g(1) \cdot k(0)+ g(1) \cdot m(0)=a \cdot b + a \cdot c$$
\end{itemize}
This proves {\textbf{NF3}}, and completes the proof that $N$ is a near-field. Moreover:
\begin{itemize}
 \item[$\odot$] $K$ acts on $N$ by: $k(a)=km(0)=k(0)+m(0)=a+k(0)$.
 \item[$\odot$] $G_0$ acts on $N$ by: $g(a)=gh(1)=g(1)\cdot h(1)=g(1)\cdot a$. (If $a=0$ then $g(0)=0=g(1)\cdot 0$).
\end{itemize}
Since $K$ is a fixed-point free normal subgroup, every $\gamma \in G$ can be written as $\gamma=gk$ for some $g \in G_0$ and $k \in K$. Hence we have $$\gamma(a)=gk(a)=g(a+k(0))=g(1)\cdot(a+k(0))=g(1)\cdot a+g(1)\cdot k(0)$$ So $G$ acts on $N$ as $x \mapsto a\cdot x +b$ for $a \in N \setminus \{0\}$ and $b \in N$. Since $G$ is $2$-transitive, it contains all the permutations of this form.
\end{proof}

\subsection{Algebraic Groups}
When dealing with algebraic groups, the corresponding notion of transitivity is a bit different:
\begin{defi}\label{def:gtd}\rm Let $\rho:G\action X$ be an algebraic group acting algebraically on an algebraic variety $X$. $\rho$ is called {\it{generically $n$-transitive}} if the action $\rho^n$ of $G$ on $X^n$ admits an open dense orbit. If the action itself does not admit an open dense orbit we will say that it is generically $0$-transitive. The {\it{generic transitivity degree}} of the action $\rho$ is defined by  $\gtd(\rho)=\sup\LR{n:\rho\mbox{ is generically } n\mbox{-transitive}}.$ The {\it{generic transitivity degree}} of the group $G$ is defined by $\gtd(G)=\sup\LR{\gtd(\rho):\rho\mbox{ is an algebraic action of } G}.$
\end{defi}

  In \cite{P}, V.L.Popov gives a complete classification of algebraic groups by their generic transitivity degree. We will cite a few results from this paper, which are important for us, but are far from being the main results there.
\begin{lemma}\rm{\cite[Lemma 2]{P}} \label{p2}
 Let $\ga_i$ be an action of a connected algebraic group $G$ on an irreducible algebraic variety $X_i$, $i=1,2$. Assume that there exists a $G$-equivariant dominant rational map $\varphi:X_1 \dashrightarrow X_2$. Then $\gtd(\ga_1)\leq\gtd(\ga_2)$. Moreover, if $\dim(X_1)=\dim(X_2)$ then $\gtd(\ga_1)=\gtd(\ga_2)$.
\end{lemma}
\begin{cor}\label{p2c}
 Let $G$ be a connected algebraic group $G$ and $H<M<G$ be two closed subgroups. Then $\gtd(G:G/H)\leq\gtd(G:G/M)$.
\end{cor}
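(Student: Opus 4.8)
The plan is to apply Lemma \ref{p2} to the obvious projection between the two homogeneous spaces. Since $H<M$ are closed subgroups of $G$, both $G/H$ and $G/M$ exist as algebraic varieties, carrying the left-translation actions $\ga_1$ of $G$ on $X_1=G/H$ and $\ga_2$ of $G$ on $X_2=G/M$. Because $H<M$, the assignment $gH\mapsto gM$ is well-defined, and it is a morphism of varieties which is $G$-equivariant for these two actions. It is clearly surjective (the coset $gM$ has preimage $gH$), hence dominant. So $\varphi:X_1\dashrightarrow X_2$ is a $G$-equivariant dominant rational map.

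It remains only to note that the hypotheses ``connected $G$'' and ``irreducible $X_i$'' of Lemma \ref{p2} hold: $G$ is connected by assumption, and $G/H$, $G/M$ are each the image of the connected (hence irreducible) variety $G$ under the quotient morphism, so they are irreducible. Lemma \ref{p2} then gives $\gtd(G:G/H)=\gtd(\ga_1)\leq\gtd(\ga_2)=\gtd(G:G/M)$.

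There is no real obstacle here; the corollary is a direct specialization of Lemma \ref{p2}. The only point deserving a word is the well-definedness and regularity of the projection $G/H\to G/M$, which is a standard fact about quotients of algebraic groups by closed subgroups.
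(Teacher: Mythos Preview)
Your argument is correct and is exactly the intended one: the paper states the corollary immediately after Lemma~\ref{p2} without proof, and the implicit derivation is precisely the application of that lemma to the $G$-equivariant surjective morphism $G/H\to G/M$, with irreducibility of the quotients coming from the connectedness of $G$.
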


It turns out that an action of a reductive group with reductive stabilizers cannot be too transitive. This result, which will be central to our argument, was first observed by Popov \cite[Lemma]{P}, in characteristic 0, as a corollary to the main theorem of Luna in \cite{L}. In general characteristic the result was proved by Brundan in \cite[Theorem 1]{Br1} (see also \cite[Corollary 6.8]{Se},\cite{Br2},\cite{Br3}). 
\begin{theorem} \label{lp} 
If $H < G$ are algebraic groups, both reductive. Then $\gtd(G,G/H) = 1$. 
\end{theorem}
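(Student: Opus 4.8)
The plan is to prove only that the action is not generically $2$-transitive: since $G$ acts transitively on $G/H$ the action is automatically generically $1$-transitive, so it suffices to rule out a dense $G$-orbit on $(G/H)\times(G/H)$, which then forces $\gtd(G,G/H)=1$. I may assume $G$ is connected and $H$ a proper closed subgroup (the degenerate case $H=G$, where $G/H$ is a point, is excluded by the notation $H<G$).

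First I would reduce the problem to double cosets. Suppose $\mathcal O\subseteq(G/H)^2$ is a dense $G$-orbit; being a dense orbit of an algebraic action on an irreducible variety, $\mathcal O$ is open. Since the $G$-translates of $\{eH\}\times(G/H)$ cover $(G/H)^2$, the nonempty $G$-stable set $\mathcal O$ meets $\{eH\}\times(G/H)$, hence meets it in a nonempty open subset; and because $\mathrm{Stab}_G(eH)=H$ and $\mathcal O$ is a single $G$-orbit, that subset lies inside a single $H$-orbit. Thus $H$ has an open (hence dense) orbit $H\cdot gH$ on $G/H$, so the double coset $HgH$, which is the $H$-saturated preimage of $H\cdot gH$ in $G$, is dense in $G$. (Conversely a dense double coset produces the dense $G$-orbit $G\cdot(eH,gH)$ on $(G/H)^2$ by a dimension count, so the two conditions are equivalent; only this direction is needed.) So it is enough to show that $H$ has no dense orbit on $G/H$.

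Next I would bring in reductivity. Since $H$ is reductive, Matsushima's criterion gives that $G/H$ is an affine variety; it is smooth, and $H$ fixes the base point $eH$. In characteristic zero I would apply Luna's étale slice theorem to the reductive group $H$ acting on the smooth affine variety $G/H$ at the fixed point $eH$: the orbit of $eH$ is a point and its stabiliser $H$ is reductive, so, étale-locally near $eH$, $G/H$ is $H$-isomorphic to the tangent space $T_{eH}(G/H)=\mathfrak g/\mathfrak h$ with its isotropy ($\mathrm{Ad}$-)action, and the induced map on categorical quotients is an isomorphism near the relevant points. Hence $H$ has a dense orbit on $G/H$ if and only if it has one on the linear representation $\mathfrak g/\mathfrak h$, i.e.\ if and only if $k[\mathfrak g/\mathfrak h]^H=k$. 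To rule this out I would choose a $G$-invariant symmetric bilinear form $B$ on $\mathfrak g$ whose restriction to $\mathfrak h$ is non-degenerate — such a form exists because $H$ is reductive in $G$ and the characteristic is zero, e.g.\ coming from the trace form of a faithful representation — so that $\mathfrak g=\mathfrak h\oplus\mathfrak h^{\perp}$ as $H$-modules, $\mathfrak g/\mathfrak h\cong\mathfrak h^{\perp}$, and $B|_{\mathfrak h^{\perp}}$ is non-degenerate; then, the characteristic being different from $2$, the quadratic form $v\mapsto B(v,v)$ is a non-constant $H$-invariant polynomial on $\mathfrak h^{\perp}$, so $k[\mathfrak g/\mathfrak h]^H\neq k$. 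This yields $\gtd(G,G/H)=1$. For arbitrary characteristic (including $2$ and $3$, where both Luna's slice theorem and the non-degeneracy of the trace form can fail) the statement is \cite[Theorem~1]{Br1}, whose proof replaces the slice theorem by a direct analysis of the separability of the orbit maps; see also \cite{P,Se}.

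The hard part will not be the bookkeeping of the first two paragraphs but the two places where reductivity is genuinely used. A naive proof by dimension count is doomed: a hypothetical dense $H$-orbit on $G/H$ would have stabiliser of dimension $2\dim H-\dim G$, which is entirely consistent — for a Levi subgroup such as $\GL_2<\SL_3$ it even predicts a finite generic stabiliser — so one has no choice but to exhibit the non-constant invariant, and that is exactly what Luna's linearisation of the $H$-action at the fixed point $eH$ (or, in bad characteristic, Brundan's substitute) supplies. Accordingly the real content of the theorem, and the step I expect to be the crux of any self-contained treatment, is the slice-theorem input; once one is reduced to a linear representation of $H$ carrying a non-degenerate invariant quadratic form, the conclusion is immediate.
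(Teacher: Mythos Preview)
Your reduction to the existence of a dense double coset $HgH$ (equivalently, a dense $H$-orbit on $G/H$) is the same as the paper's, and for arbitrary characteristic both you and the paper simply invoke Brundan's theorem \cite{Br1}. The difference lies in characteristic zero. The paper does not linearise: it applies Luna's main theorem \cite{L} directly to the $H^0$-action on the affine variety $G^0/H^0$ to conclude that a \emph{generic} $H^0$-orbit is closed, which is incompatible with the existence of an open dense orbit unless $H^0$ acts transitively --- and that is absurd since $H^0$ fixes the base point $eH^0$. Your route instead passes through the slice theorem at the fixed point $eH$ to compare $(G/H)/\!/H$ with $(\mathfrak g/\mathfrak h)/\!/H$, and then manufactures a non-constant invariant on $\mathfrak g/\mathfrak h$ from the trace form. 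This is more constructive and explains \emph{where} the obstruction comes from, but is heavier machinery than the paper's one-line appeal to Luna's closed-orbit result.

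Two points of sloppiness worth flagging. First, ``I may assume $G$ is connected'' needs an argument: the paper spends a paragraph showing that an open $H$-orbit on $G/H$ forces an open dense double coset $H^0gH^0$ inside $G^0$, using that $[H:H^0]<\infty$ and a covering argument for $G/H^0\to G/H$; you should not skip this. Second, your chain ``dense orbit on $G/H$ $\Leftrightarrow$ dense orbit on $\mathfrak g/\mathfrak h$ $\Leftrightarrow$ $k[\mathfrak g/\mathfrak h]^H=k$'' is not a chain of equivalences --- the last implication only goes one way (there are representations with $k[V]^H=k$ but no dense orbit). Fortunately only the forward direction ``dense orbit on $G/H$ $\Rightarrow$ $k[\mathfrak g/\mathfrak h]^H=k$'' is needed, and that does follow from the \'etale identification of categorical quotients together with the fact that $(\mathfrak g/\mathfrak h)/\!/H$ is a cone.
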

\begin{proof}
In fact we will prove a little more by showing that, under the same assumptions, there are no open orbits in $G/H \times G/H$. Since the action is transitive, we have $\gtd(G,G/H) \ge 1$.  Assume, by way of contradiction, that the action of $H$ on $G/H$ admits an open orbit. 

Since $[H:H^0] < \infty$ every $H$ orbit, and in particular the open orbit, is a finite union of $H^0$ orbits. Hence at least one of the $H^0$ orbits, say $\ol{O}=H^0gH$, is open. Since $H^0$ is connected so are its orbits, so that $\ol{O}$ is contained the connected component $\ol{X} := G^0 g H$ of $G/H$. Consider the natural map $\phi: G/H^0 \arrow G/H$ and restrict it to the connected component, $X = G^0 g H^0$. Since $[H:H^0] < \infty$  the map $\phi: X \arrow \ol{X}$ is a covering map and hence $O := H^0 g H^0 = \phi^{-1}(\ol{O}) \cap X$ is still open in $X$ and since $X$ is connected this set is also dense. Thus $H^0 g H^0$ is an open dense double coset in $G^0$ which contradicts \cite[Theorem 1]{Br1}. 
\end{proof}

\begin{rmk}
 In case of $\Char(F)=0$ the proof can be ended alternatively. Since both $G^0,H^0$ are  reductive and connected it follows from \cite[Main theorem]{L} that a generic point in $G^0/H^0$ has a closed $H^0$ orbit. Since we saw that there is an open dense orbit, the action $H^0 \curvearrowright G^0/H^0$ must be transitive which is absurd.  
\end{rmk}

\section{Proof of the theorem}
Throughout this section, we let $F$ be a field with $\Char(F)\neq2$ and $\Gamma<\GL_n(F)$ is a sharply $2$-transitive group, with $\pchar(\Gamma)\neq2$, acting sharply $2$-transitively on a set $\Omega$. Since the theorem is known for finite sharply 2-transitive groups \cite{Z1} we assume that $\Gamma$ is infinite. 

\begin{theorem}\label{banal} \rm
Let $\Gamma<\GL_n(F)$ be a sharply $2$-transitive group. Assume that $\Char(F)\neq2$ and that $\pchar(\Gamma)\neq2$. Then there exist a non-trivial abelian normal subgroup $N\unlhd \Gamma$.
\end{theorem}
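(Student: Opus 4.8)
The plan is to produce the normal abelian subgroup $N$ as (the Zariski-closure intersected with $\Gamma$ of) the subgroup generated by the fixed-point-free elements $\Inv(\Gamma)^2 \setminus \{1\}$, and to locate it using the structure theory of linear algebraic groups together with Theorem \ref{lp}. Let $\Sc$ be the Zariski closure of $\Gamma$ in $\GL_n(F)$; it is a linear algebraic group, and since $\Gamma$ is $2$-transitive it is highly noncommutative, so $\Sc$ is infinite. The first step is to reduce to a convenient algebraic setup: pass to $\Sc^0$, and study the unipotent radical $R_u(\Sc)$ and a Levi decomposition $\Sc^0 = R_u(\Sc)\rtimes L$ with $L$ reductive. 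The candidate for the source of $N$ is the unipotent radical: I would aim to show $R_u(\Sc)\cap\Gamma$ is nontrivial, and then refine it to an abelian normal subgroup by descending the lower central (or derived) series of $R_u(\Sc)$, each term of which is normalized by $\Sc$ hence by $\Gamma$; the last nontrivial term of the derived series intersected with $\Gamma$ — or rather, one must check it meets $\Gamma$ nontrivially — will be abelian and normal in $\Gamma$.

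The key leverage is Theorem \ref{lp} (and Corollary \ref{p2c}): if $\Sc$ were reductive, then for the point stabilizer $\Gamma_\omega$, whose Zariski closure $\mathcal{H}$ sits inside $\Sc$, one would want to contradict the generic $2$-transitivity coming from the sharply $2$-transitive action. Concretely, the sharply $2$-transitive action of $\Gamma$ on $\Omega$ identifies $\Omega$ with $\Gamma/\Gamma_\omega$ and the action on pairs is free and transitive on the off-diagonal; passing to closures, this should force $\gtd(\Sc, \Sc/\mathcal{H}) \geq 2$. If $\Sc$ and $\mathcal{H}$ were both reductive this contradicts Theorem \ref{lp} (via Corollary \ref{p2c}, comparing with an intermediate reductive subgroup if necessary). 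Hence either $\Sc$ has a nontrivial unipotent radical, or some stabilizer-type subgroup fails to be reductive — and non-reductivity again hands us a nontrivial connected unipotent normal-ish subgroup to work with. In either case one extracts a nontrivial connected unipotent subgroup $U \leq \Sc$ that is normal in $\Sc^0$; taking the last nontrivial term of its derived series gives an abelian connected $V \unlhd \Sc^0$.

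Having produced such a $V$, the remaining work is to promote it to a subgroup of $\Gamma$ itself and make it normal in all of $\Gamma$ (not just $\Sc^0$). For the latter, one replaces $V$ by a characteristic subgroup of $R_u(\Sc)$ (e.g., a term of its derived or lower central series, or the center of $R_u(\Sc)$), which is automatically normalized by all of $\Sc \supseteq \Gamma$. For the former — showing $V \cap \Gamma \neq \{1\}$ — this is where I expect the main obstacle to lie: a Zariski-dense subgroup of an algebraic group need not meet a given normal subgroup. I would handle this by a more hands-on argument with the fixed-point-free elements: since $\pchar(\Gamma) \neq 2$ and $\Gamma$ is infinite, $\Inv(\Gamma)^2 \setminus \{1\}$ is an infinite fixed-point-free conjugacy-closed set (Propositions \ref{prop_per}, \ref{charn2}, Lemma \ref{inv_reg}), and products/commutators of involutions will be constrained; combining the algebraic picture (a unipotent, hence in char $0$ torsion-free and in char $p$ of bounded exponent, normal subgroup) with the permutation-theoretic fact that these elements are regular should force the relevant commutators into $V \cap \Gamma$ and show this intersection is nontrivial. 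The case $\Char(F) = p > 0$ versus $\Char(F) = 0$ will likely need to be separated here, and the hypothesis $\Char(F) \neq 2$ enters to guarantee involutions and the translation elements behave as in Example \ref{ex_aff}.
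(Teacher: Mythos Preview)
Your overall architecture matches the paper's: argue that a reductive Zariski closure is impossible via Theorem~\ref{lp}, and extract the abelian normal subgroup from the unipotent radical otherwise. But two of the load-bearing steps are not in place.

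First, the obstacle you flag --- showing that the abelian characteristic subgroup $V$ of $R_u(\Sc)$ actually meets $\Gamma$ --- is real, and your proposed fix via manipulating elements of $\Inv(\Gamma)^2$ is vague and not obviously workable. The paper sidesteps this entirely: if $\Gamma \cap R_u(\Sc) = \{1\}$, then the quotient map $\Sc \to \Sc/R_u(\Sc)$ restricts to a \emph{faithful} representation of $\Gamma$ whose Zariski closure is now reductive. So one simply replaces the original representation by this one and is thrown back into the reductive case. There is no need to force elements of $\Gamma$ into $V$; either they are already there (and the lower central series of $\Gamma \cap R_u(\Sc)$ finishes the job), or one passes to a better representation.

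Second, in the reductive case you need a \emph{reductive} subgroup between $H = \ol{\Gamma_\omega}^Z$ and $G$ to apply Theorem~\ref{lp}, and you do not name one. Your fallback ``or some stabilizer-type subgroup fails to be reductive'' does not help: the unipotent radical of $H$ is normal only in $H$, not in $G$, so it yields nothing normal in $\Gamma$. The paper's move is precise: since $\pchar(\Gamma) \neq 2$ there is a unique involution $\sigma \in \Gamma_\omega$, it is central in $\Gamma_\omega$ hence $H \leq C_G(\sigma)$, and since $\Char(F) \neq 2$ this $\sigma$ is semisimple, so $C_G(\sigma)$ is reductive. Then Corollary~\ref{p2c} gives $\gtd(G:G/C_G(\sigma)) \geq \gtd(G:G/H) \geq 2$, contradicting Theorem~\ref{lp}. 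This is exactly where both characteristic hypotheses are spent, not in the translation-element heuristics you suggest.
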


\begin{prop}\label{7.1}
Let $\Gamma$ be as in the assumptions of Theorem \ref{banal}. If the conclusion of Theorem \ref{banal} fails, then there exists an algebraically closed field $k$ and a faithful representation $\rho:\Gamma\to\GL_n(k)$ such that $\GG=\ol{\rho(\Gamma)}^Z$ is reductive.
\end{prop}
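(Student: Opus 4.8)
The plan is to argue by contrapositive: assume $\Gamma$ has no non-trivial abelian normal subgroup and then manufacture a faithful linear representation whose Zariski closure is reductive. First I would pass to the Zariski closure $\mathbf{H} = \overline{\Gamma}^Z \leq \GL_n(F)$, replacing $F$ by its algebraic closure so that $\mathbf{H}$ is an algebraic group over an algebraically closed field. The key point is that $\Gamma$ being Zariski dense in $\mathbf{H}$ forces a tight relationship between the normal structures: the unipotent radical $\mathbf{U} = R_u(\mathbf{H})$ is a $\Gamma$-invariant (being characteristic in $\mathbf{H}$) closed connected normal subgroup, and if it were non-trivial its last non-trivial term in the descending central series would be a non-trivial abelian normal algebraic subgroup. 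Intersecting with $\Gamma$, or rather using that $\Gamma$ is dense, I would extract from it a non-trivial abelian normal subgroup of $\Gamma$ — here one has to be a little careful, since $\mathbf{U} \cap \Gamma$ could in principle be trivial, so instead I would use that $\Gamma$ normalizes $\mathbf{U}$ and acts on the abelian quotient at the bottom of its central series, and that a Zariski-dense subgroup of a connected unipotent group (in char $0$, or more care in char $p$) must meet it non-trivially, or simply invoke that a dense subgroup of a non-trivial connected algebraic group is infinite hence non-trivial on each characteristic subgroup. This shows $R_u(\mathbf{H}) = 1$, i.e. $\mathbf{H}$ is reductive, at least for $\mathbf{H}^0$; the component group is finite and does not affect reductivity.

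The remaining subtlety is that reductivity of $\mathbf{H}$ is exactly what we want, but we must also ensure the representation is \emph{faithful} and that we have not lost anything in passing to $\overline{\rho(\Gamma)}^Z$; since $\Gamma \leq \GL_n(F)$ is already faithful, taking $\rho$ to be the inclusion (base-changed to $k = \overline{F}$) and $\GG = \overline{\rho(\Gamma)}^Z$ does the job. So the real content is: \emph{no non-trivial abelian normal subgroup of $\Gamma$} $\Longrightarrow$ \emph{the Zariski closure has trivial unipotent radical}. I would phrase the extraction step as follows: let $\mathbf{V}$ be the last non-trivial term of the derived-or-descending-central series of $R_u(\mathbf{H})^0$ — a non-trivial connected abelian unipotent group, characteristic in $\mathbf{H}$ and hence normalized by $\Gamma$. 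In characteristic $0$ it is a vector group, so $\Gamma \cap \mathbf{V}$ is Zariski dense in $\mathbf{V}$ (a subgroup of $(k^m,+)$ dense in $\GL$-topology spans), hence non-trivial, giving a non-trivial abelian normal subgroup of $\Gamma$; in characteristic $p$ one replaces this by the observation that $\mathbf{V}$ has a characteristic subgroup of exponent $p$ and applies the analogous density argument. Either way the conclusion of Theorem \ref{banal} would hold, contrary to hypothesis.

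The main obstacle I anticipate is the positive-characteristic case: a Zariski-dense subgroup of a connected unipotent group over $\overline{\mathbb{F}_p}$ need not intersect it in a dense (or even non-trivial) subgroup once $\Gamma$ is finite, but $\Gamma$ is infinite by the standing assumption of the section, and an infinite subgroup dense in a positive-dimensional unipotent group must be infinite in that group too — still, one must check that $\Gamma \cap \mathbf{V}$ rather than merely $\Gamma$'s image in $\mathbf{H}/\mathbf{V}$ carries the information, which may require choosing $\mathbf{V}$ minimal and using a commutator trick: $[\Gamma, \mathbf{V} \cap \Gamma]$ sits inside $\mathbf{V} \cap \Gamma$ by normality and inside $[\mathbf{H},\mathbf{V}]$, which is a proper connected subgroup of $\mathbf{V}$ by nilpotence, so by a descending argument one lands on a non-trivial central piece. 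I would present the characteristic-$0$ argument in full and remark that characteristic $p$ follows by the same scheme applied to the subgroup scheme of $p$-torsion, flagging that this is where the hypothesis $\Gamma$ infinite is used.
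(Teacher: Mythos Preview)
Your argument has a genuine gap at the crucial step. You attempt to prove something stronger than the proposition asserts: that the Zariski closure of the \emph{given} representation is already reductive. To do this you need $\Gamma \cap \mathbf{V} \neq 1$, and your justifications for this do not hold. Zariski density of $\Gamma$ in $\mathbf{H}$ does \emph{not} imply that $\Gamma \cap \mathbf{V}$ is Zariski dense (or even non-trivial) in a closed normal subgroup $\mathbf{V}$; your sentence ``a dense subgroup of a non-trivial connected algebraic group is infinite hence non-trivial on each characteristic subgroup'' is simply false as stated, and the other proposed fixes (spanning in a vector group, $p$-torsion subschemes, commutator descent) all presuppose that $\Gamma \cap \mathbf{U}$ is already non-trivial or dense, which is exactly the point at issue. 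You yourself flag this as ``the main obstacle'', and it is not resolved.

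The paper's proof avoids this difficulty entirely by exploiting the freedom in the statement: one is asked only for \emph{some} faithful representation with reductive closure, not the original one. The argument becomes a clean dichotomy on $N := \Gamma \cap \GG_u$. If $N \neq 1$, then $N$ is nilpotent (unipotent matrices) and normal in $\Gamma$ (since $\GG_u \trianglelefteq \GG_0$), so the last non-trivial term of the lower central series of $N$ is a non-trivial abelian normal subgroup of $\Gamma$, contradicting the hypothesis. If $N = 1$, then the composite $\Gamma \hookrightarrow \GG_0 \twoheadrightarrow \GG_0/\GG_u$ is injective, and this quotient map is the new faithful linear representation $\rho$; its Zariski closure sits inside the reductive group $\GG_0/\GG_u$, hence is reductive. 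No density argument inside $\GG_u$ is needed at all, and no separate treatment of positive characteristic is required.
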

\begin{proof}
Let $\rho_0:\Gamma\to\GL_n(k)$ be a faithful representation over some algebraically closed field $k$. Let $\GG_0=\overline{\rho_0(\Gamma)}^Z$ be the Zariski closure and let $\GG_u$ be the unipotent radical of $\GG$. Let $N=\rho_0(\Gamma)\cap\GG_u$. Since $N$ is nilpotent, if it is non-trivial then the penultimate element of the lower central series of $N$ is a normal abelian subgroup of $\rho(\Gamma)$, which contradicts the assumption. Hence we can divide by $\GG_u$ and obtain a new faithful representation $\rho$, for which $\GG=\overline{\rho(\Gamma)}^Z$ is reductive.
\end{proof}

The following Proposition is simple but central because it allows us to pass from the given sharply $2$-transitive action to a generically $2$-transitive action of the Zariski closure. It is here where our assumption on the permutational characteristic is used to show that the latter action is not trivial.  The assumption on the characteristic of the field is also used to show that the stabilizer of a point in the latter action is contained in a proper reductive subgroup. 
\begin{prop}\label{orig}
Let $\Gamma$ and $F$ be as in the assumptions of Theorem \ref{banal}. Let $k$ be the algebraic closure of $F$, $G, H < \GL_n(k)$ the Zariski closures of $\Gamma$ and $\Gamma_\omega$ respectively, and $\sigma \in \Gamma_\omega$ the unique involution. Then $\sigma$ is semi-simple, $H \leq C_G(\sigma)$ and $\gtd(G:G/H)\geq2$.
\end{prop}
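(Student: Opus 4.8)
The plan is to establish the three assertions in order, using the hypothesis $\pchar(\Gamma) \neq 2$ to produce a nontrivial element of $\Inv(\Gamma)^2$ of odd prime or infinite order, and the hypothesis $\Char(F) \neq 2$ to diagonalize the involution. First I would show $\sigma$ is semisimple. Since $\sigma$ is an involution and $\Char(k) = \Char(F) \neq 2$, its minimal polynomial divides $X^2 - 1 = (X-1)(X+1)$, which has distinct roots; hence $\sigma$ is diagonalizable, i.e. semisimple as an element of $\GL_n(k)$. Next, $H \leq C_G(\sigma)$: by Proposition~\ref{charn2} the given action of $\Gamma$ on $\Omega$ is isomorphic to the conjugation action on $\Inv(\Gamma)$, and under this identification the point $\omega$ corresponds to the unique involution $\sigma \in \Gamma_\omega$ (Lemma~\ref{u.f.i}); so $\Gamma_\omega$ is precisely the centralizer $C_\Gamma(\sigma)$. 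Taking Zariski closures, $H = \ol{\Gamma_\omega}^Z = \ol{C_\Gamma(\sigma)}^Z \leq C_G(\sigma)$, the last inclusion because $C_{\GL_n(k)}(\sigma)$ is Zariski closed and contains $\Gamma_\omega$.

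The main point is $\gtd(G : G/H) \geq 2$. The idea is to transport the sharp $2$-transitivity of $\Gamma \action \Omega$ into a generic $2$-transitivity statement for $G$ acting on $G/H$. Note first that $\Gamma$ acts on $\Omega$ with point stabilizer $\Gamma_\omega$, so $\Omega \cong \Gamma/\Gamma_\omega$ as a $\Gamma$-set, and $\Gamma$ sits densely in $G$ while $\Gamma_\omega$ sits densely in $H$; thus the orbit map gives a $\Gamma$-equivariant embedding of $\Omega \cong \Gamma/\Gamma_\omega$ into $G/H$ with Zariski-dense image. Consider the diagonal action of $G$ on $(G/H)^2$. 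Its orbits are locally closed, so it suffices to show that one orbit is dense — equivalently, that the $\Gamma$-orbit of some pair of points coming from distinct elements of $\Omega$ is Zariski dense in $(G/H)^2$. Here I would use sharp $2$-transitivity: fix $x \neq y$ in $\Omega$ with images $\bar x, \bar y \in G/H$; then $\Gamma \cdot (\bar x, \bar y)$ contains the image of every pair of distinct points of $\Omega$, hence its closure contains the closure of $\{(\bar u, \bar v) : u \neq v \in \Omega\}$, which is $\ol{\Omega} \times \ol{\Omega} = G/H \times G/H$ minus possibly the diagonal — and since $G/H$ is irreducible of positive dimension (it must be infinite, as $\Gamma$ is infinite and $\Omega$ is $\Gamma_\omega$-infinite) the diagonal is a proper closed subset, so the closure is all of $(G/H)^2$. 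Thus $G \cdot (\bar x, \bar y)$ is dense, giving an open dense orbit and $\gtd(G : G/H) \geq 2$. One must be slightly careful that $G/H$ is irreducible; if not, replace $G$ by $G^0$ and $H$ by $G^0 \cap H$ and argue on the component meeting the image of $\Omega$, using Corollary~\ref{p2c} to compare generic transitivity degrees — but it is cleaner to first pass to $G^0$ throughout, observing $[\Gamma : \Gamma \cap G^0] < \infty$ and that a finite-index subgroup of a sharply $2$-transitive group still controls the geometry.

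The step I expect to be the genuine obstacle is verifying that the density transfer is valid at the level of $(G/H)^2$ rather than just $G/H$: one needs that the dense $\Gamma$-orbit on $\Omega$ persists as a dense orbit on the square, and this rests on the purely combinatorial fact that in a sharply $2$-transitive action the set of ordered distinct pairs is a single orbit, whose image is therefore dense by continuity of the orbit map and density of $\Omega$ in $G/H$. The characteristic hypotheses do not enter this last step; they are consumed entirely in the semisimplicity of $\sigma$ (needing $\Char F \neq 2$) and in guaranteeing $\Gamma_\omega$ actually contains an involution so that Proposition~\ref{charn2} applies and $H \leq C_G(\sigma)$ is a proper subgroup (needing $\pchar(\Gamma) \neq 2$). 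I would close by remarking that combined with Proposition~\ref{7.1} and Theorem~\ref{lp}, this proposition forces the closure to be non-reductive, which is the contradiction driving the proof of Theorem~\ref{banal}.
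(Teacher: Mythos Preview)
Your argument is essentially the paper's, with the same three steps and the same use of the two characteristic hypotheses. The only packaging difference is in the $\gtd\geq 2$ step: the paper shows that for any $\gamma\in\Gamma\setminus\Gamma_\omega$ the double coset $H\gamma H$ is open and dense in $G/H$ (it contains the dense set $\Gamma H\setminus\{H\}$ and, being a single $H$-orbit, is locally closed), and then concludes fiberwise that the $G$-orbit of $(H,\gamma H)$ is open dense in $(G/H)^2$. You instead push the density of $\Omega$ directly into the product. Both routes are equivalent.

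Your one unnecessary detour is the worry about irreducibility of $G/H$ and the proposed passage to $G^0$. What your product argument actually needs is only that the diagonal has empty interior in $(G/H)^2$, and for this it suffices that $\dim G/H>0$: as a homogeneous space $G/H$ is equidimensional, so $\Delta$ has strictly smaller dimension than every component of $(G/H)^2$. That $\dim G/H>0$ (or else $G=H$, in which case $\gtd=\infty$ trivially) follows immediately from primitivity: $\Gamma_\omega\leq\Gamma\cap H\leq\Gamma$, and since $\Gamma_\omega$ is maximal either $\Gamma\cap H=\Gamma_\omega$, whence $\Omega\hookrightarrow G/H$ is injective and $G/H$ is infinite, or $\Gamma\subseteq H$ and $G=H$. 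Your suggested fix via $G^0$ is both unneeded and not clearly correct as stated: Corollary~\ref{p2c} compares subgroups of the same ambient group, not a change of ambient group, and the remark that ``a finite-index subgroup of a sharply $2$-transitive group still controls the geometry'' is unjustified, since such a subgroup need not even be transitive.
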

\begin{proof}
By Lemma \ref{u.f.i}, $\sigma$ is the unique involution in $\Gamma_\omega$, it is centralized by all elements of $\Gamma_\omega$. Commuting with a given element is a Zariski closed condition, hence it passes to $\ol{(\Gamma_\omega)}^Z=H$, so $H\leq C_G(\sigma)$. Since $\Char(k)=\Char(F)\neq2$, $\sigma$ is semi-simple as its minimal polynomial divides $x^2-1$ and is thus a product of linear factors with multiplicity 1.

Let $\gamma \in \Gamma \setminus \Gamma_\omega$ be any element not in $\Gamma_\omega$. By definition of $2$-transitivity we have $\Gamma / \Gamma_\omega = \Gamma_\omega \sqcup \Gamma_\omega \gamma \Gamma_\omega$. Now consider the set $H \sqcup H \gamma H < G/H$. This is a Zariski dense set since it contains the dense set $\Gamma H  = H \sqcup \Gamma_\omega \gamma H$ (which is dense since $\bar{\Gamma}^Z=G$); moreover, being an orbit of an algebraic action, it is locally closed (i.e. open in its closure) by \cite[Proposition 8.3]{Hu}. Thus $H \sqcup H\gamma H$ is open in $G/H$, and since points are closed so is $H \gamma H$. Consider the projection map $\pi: G/H \times G/H \arrow G/H$ which is equivariant with respect to the natural left $G$-action. We have just seen that the intersection of the $G$-orbit $G(H,\gamma H)$ with the fiber $\pi^{-1}(H)$ is open and dense in the fiber. Since the action of $G$ on $G/H$ is transitive this is actually true for every fiber. Hence, the $G$-orbit $G(H,\gamma H) \subset G/H \times G/H$ is dense. Since all orbits are automatically open in their closures we are finished. 
 \qedhere
\end{proof}

\begin{proof}[Proof of Theorem \ref{banal}]
Assume by contradiction that there is no non-trivial abelian normal subgroup of $\Gamma$. By Proposition 
\ref{7.1} we may assume that $\Gamma < \GL_n(k)$ where $k$ is an algebraically closed field and that the Zariski closure $G =\ol{\Gamma}^Z$ is reductive. Now let $\Gamma_\omega$ be the stabilizer of a point in the sharply $2$-transitive action, $H=\ol{\Gamma_\omega}^Z$ its Zariski closure and $\sigma \in \Gamma_\omega$ the unique involution in $\Gamma_\omega$. Such an involution exists as we have assumed that $\pchar(\Gamma) \ne 2$. By Proposition \ref{orig} the action on the level of the Zariski closures $G \curvearrowright G/H$ is generically 2-transitive. Setting $C = C_G(\sigma)$ we conclude using Corollary \ref{p2c} that $$\gtd(G: G/C) \ge \gtd(G: G/H) \ge 2.$$ By Proposition \ref{orig}, using the characteristic assumption on the field this time, $\sigma$ is semisimple and hence $C = C_G(\sigma)$ is reductive by \cite[Proposition 13.19]{B}. This stands in contradiction to Theorem 
\ref{lp} and concludes the proof. 
\end{proof}

\noindent We are now ready to prove our main theorem. 
\begin{proof}[Proof of Theorem \ref{thm1}]
By Theorem \ref{banal}, $\Gamma$ admits a non-trivial abelian normal subgroup $N\unlhd \Gamma$. Take any $\omega\in\Omega$. Since $N$ is abelian, $[N_\omega,N]=\LR{e}$, but $N$ is transitive - hence $N_\omega=\LR{e}$. So $N$ is a normal, fixed-point free abelian subgroup. Theorem \ref{thm_dm} applies and concludes the proof.
\end{proof}

\noindent\textbf{Acknowledgments.} Both authors were partially supported by ISF grant 441/11.

\noindent\textsc{Yair Glasner.} Department of Mathematics. Ben-Gurion
University of the Negev. P.O.B. 653, Be'er Sheva 84105, Israel.
\texttt{yairgl\@@math.bgu.ac.il}\bigskip

\noindent\textsc{Dennis D. Gulko.} Department of Mathematics. Ben-Gurion
University of the Negev. P.O.B. 653, Be'er Sheva 84105, Israel.
\texttt{gulkod\@@math.bgu.ac.il}\bigskip

\end{document}